\newtheorem{thm}{Theorem}[section]
\newtheorem*{thm*}{Theorem}
\newtheorem{cor}[thm]{Corollary}
\newtheorem*{cor*}{Corollary}
\newtheorem{prop}[thm]{Proposition}
\theoremstyle{definition}
\newtheorem{defn}[thm]{Definition}
\title[A Unique Prime Decomposition Result for Wreath Product Factors]{A Unique Prime Decomposition Result for Wreath Product Factors}
\author{J. Owen Sizemore}
\address{J. Owen Sizemore, UCLA, Math Sciences Building, Los Angeles, CA 90095-1555}
\email{sizemore@math.ucla.edu} \subjclass{} \subjclass{}
\author{Adam Winchester}
\address{Adam Winchester, UCLA, Math Sciences Building, Los Angeles, CA 90095-1555}
\email{lagwadam@math.ucla.edu} \subjclass{} \subjclass{}
\keywords{} \dedicatory{}
\date{\today}
\begin{document}

\begin{abstract}We use malleable deformations combined with spectral gap rigidity theory, in the
framework of Popa's deformation/rigidity theory to prove unique
tensor product decomposition results for II$_1$ factors arising as
tensor product of wreath product factors. We also obtain a similar
result regarding measure equivalence decomposition of direct
products of such groups.
\end{abstract}

\maketitle \tableofcontents

%%%%%%%%%%%%%%%%%%%%%%%%%%%%%%%%%%%%%%%%%%%%%%%%%%%%

\section*{Introduction}

A major goal of the study of II$_1$ factors is the classification of
these algebras based on the ``input data'' that goes into their
construction. A significant landmark was the result, due to Connes
\cite{C76}, that all amenable II$_1$ factors are isomorphic.
However, in the non-amenable realm there is a much greater variety,
and a striking classification theory has developed.

One thrust of this research is to determine if some algebra which,
\textit{a priori}, is constructed in one manner, can be obtained in
some other manner. For example, if we have a II$_1$ factor that we
know to be a free product of two II$_1$ factors, is it also possible
to be the tensor product of two (possibly different) II$_1$ factors?

In this vein we study whether certain factors can be written as a
tensor product in two distinct ways. Such results go back to the
study of prime factors, (ie. a factor which cannot be written as the
tensor product of two other II$_1$ factors.) The first result was
obtained by Popa in, \cite{Po83}, where he showed that the
group von Neumann algebra of an uncountable free group is prime.

Later, in \cite{Ge98}, Ge proves that all group factors coming from
finitely generated free groups are prime. Using $C^*$ techniques
this was greatly generalized by Ozawa, \cite{Oz}, to show that all
i.c.c. Gromov hyperbolic groups give rise to prime factors. Also,
using his deformation/rigidity theory, Popa showed in
\cite{P-gap} that all II$_1$ factors arising from the Bernoulli
actions of  nonamenable groups are prime. Further, Peterson used his
derivation approach to deformation/rigidity (\cite{Pe06}) to prove that any II$_1$ factor coming from a
countable group with positive first $l^2$-betti number is also
prime. Finally we should also note that using Popa's
deformation/rigidity theory, Chifan and Houdayer,
\cite{C-H}, gave many more examples of prime II$_1$-factors coming
from amalgamated free products.

A natural question about prime factors is whether
a tensor product of a finite number of such factors $P_1, P_2, ..., P_n$,
has a ``unique prime factor decomposition'', i.e., if $P_1 \overline{\otimes}  ... \overline{\otimes} P_n
= Q_1 \overline{\otimes} .... \overline{\otimes} Q_m$, for some other prime factors $Q_j$,
forces $n=m$ and $P_i$ unitary conjugate to $Q_i$, modulo some permutation of indices
and modulo some ``rescaling'' by appropriate amplifications of the prime factors involved.
A first such result was obtained
by Ozawa and Popa in \cite{OP03}, where a combination of $C^*$ techniques from \cite{Oz}
and intertwining techniques from \cite{P1} is used to show that any
II$_1$ factor arising from a tensor product of hyperbolic group  factors
has  such a unique tensor product decomposition.

In this paper we prove an analogous unique prime factor decomposition result for tensor products of
wreath product II$_1$ factors. More precisely,  we prove the following result:

\begin{thm}\label{maintheorem}
Let $A_1, \dots, A_n$ be non-trivial amenable groups; $H_1, \dots, H_n$ be
non-amenable groups; and $Q_1, \dots, Q_k$ be diffuse von Neumann algebras
such that
\begin{equation*}
M = L(A_1 \wr H_1) \overline{\otimes} \dots \overline{\otimes} L(A_n \wr H_n) = Q_1 \overline{\otimes} \dots \overline{\otimes} Q_k
\end{equation*}
If $k \geq n$, then $n=k$, and after permutation of indices we have that $L(A_i \wr
H_i) \simeq Q_i^{t_i}$ for some positive numbers $t_1, t_2, \dots
t_n$ whose product is $1$.
\end{thm}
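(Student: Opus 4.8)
The plan is to run Popa's deformation/rigidity machinery one tensor leg at a time, treating the $P_i := L(A_i \wr H_i)$ as Bernoulli crossed products. First note that since $M$ is a factor, each $Q_j$ is automatically a II$_1$ factor, so I may assume all $Q_j$ are diffuse factors. Write $P_i = B_i \rtimes H_i$ with amenable base $B_i = L(A_i^{(H_i)})$ and Bernoulli action of the non-amenable group $H_i$, and set $\widehat{P_i} = \oo_{l\ne i} P_l$ and $\widehat{Q_j} = \oo_{l\ne j}Q_l$. For each $i$ let $(\alpha^i_t)$ be Popa's s-malleable deformation attached to the Bernoulli structure of $P_i$, extended to all of $M$ by the identity on $\widehat{P_i}$. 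I would argue by induction on $n$, the base case $n=1$ being the primeness of $L(A\wr H)$, which falls out of the main lemma below.

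The engine is the following dichotomy, which I would prove for each fixed $i$ using $\alpha^i_t$ together with the spectral gap coming from non-amenability of $H_i$: for every subfactor $N \subseteq M$, either $N \prec_M \widehat{P_i}$ or $N'\cap M \prec_M \widehat{P_i}$. The mechanism is that if $N \nprec_M \widehat{P_i}$ then $N$ genuinely involves the $i$-th Bernoulli factor; since the Bernoulli representation of $H_i$ has no almost-invariant vectors, Popa's spectral-gap argument forces $\alpha^i_t \to \id$ uniformly on the unit ball of $N'\cap M$, and transversality for the s-malleable deformation converts this uniform convergence into the intertwining $N'\cap M \prec_M \widehat{P_i}$. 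Applying this with $N = Q_j$, whose relative commutant is $\widehat{Q_j}$, yields for each pair $(i,j)$ that $Q_j\prec_M \widehat{P_i}$ or $\widehat{Q_j}\prec_M \widehat{P_i}$. Testing $n=1$, where $\widehat{P_1}=\C$, immediately rules out a diffuse $\widehat{Q_j}$, giving $k=1$ and primeness.

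Fixing $i$, I would then show that $\widehat{Q_j}\prec_M\widehat{P_i}$ holds for exactly one index $j=:\sigma(i)$. It holds for at most one, because two distinct indices $j_1\ne j_2$ with $\widehat{Q_{j_1}},\widehat{Q_{j_2}}\prec_M\widehat{P_i}$ would, using $Q_{j_2}\subseteq \widehat{Q_{j_1}}$, $Q_{j_1}\subseteq \widehat{Q_{j_2}}$ and the standard behavior of intertwining under commuting joins, force all of $M$ to intertwine into the proper tensor factor $\widehat{P_i}$, impossible since $P_i$ is diffuse. It holds for at least one, since otherwise every $Q_j\prec_M\widehat{P_i}$ and the commuting family $\{Q_j\}$ generates $M$, again forcing $M\prec_M\widehat{P_i}$. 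Thus $\sigma:\{1,\dots,n\}\to\{1,\dots,k\}$ is well defined, and a similar count gives injectivity: if $\sigma(i_1)=\sigma(i_2)$ then $\widehat{Q_{\sigma(i_1)}}$ would intertwine into $\widehat{P_{i_1}}\cap\widehat{P_{i_2}}=\oo_{l\ne i_1,i_2}P_l$, forcing its commutant to absorb both $P_{i_1}$ and $P_{i_2}$ and contradicting the trace bookkeeping. Finally, any $j$ outside the image of $\sigma$ would satisfy $Q_j\prec_M\widehat{P_i}$ for every $i$, hence $Q_j\prec_M\bigcap_i\widehat{P_i}=\C$, which is impossible for a diffuse factor; so $\sigma$ is onto and, with $k\ge n$, we conclude $k=n$ and $\sigma$ is a bijection.

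It remains to upgrade the intertwining data into genuine isomorphisms. For each $i$, the relation $\widehat{Q_{\sigma(i)}}\prec_M\widehat{P_i}$ together with the complementary inclusions yields, after passing to commutants, a two-sided intertwining between $P_i$ and $Q_{\sigma(i)}$; Popa's standard ``intertwining $\Rightarrow$ unitary conjugacy up to amplification'' lemma then produces $P_i\simeq Q_{\sigma(i)}^{t_i}$ for some $t_i>0$, and comparing the traces of $M$ across the two factorizations forces $\prod_i t_i=1$. I expect the main obstacle to be the dichotomy of the second paragraph: making the spectral-gap argument work \emph{relative} to the splitting $M=P_i\oo\widehat{P_i}$, that is, controlling a deformation acting only on the $i$-th leg against an arbitrary commuting subalgebra $N'\cap M$ that may spread across all legs, and combining transversality with the relative form of Popa's intertwining criterion to land cleanly inside $\widehat{P_i}$. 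The combinatorial matching and the amplification bookkeeping should be comparatively routine once this dichotomy is established.
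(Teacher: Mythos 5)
Your architecture (an s-malleable deformation acting on one tensor leg at a time, spectral gap from non-amenability of $H_i$, then the combinatorial matching of \cite{OP03}) is the same as the paper's, but the engine you propose is not correct as stated. You claim that for \emph{every} subfactor $N\subseteq M$ either $N\prec_M\widehat{P_i}$ or $N'\cap M\prec_M\widehat{P_i}$. This already fails for $n=1$, where $\widehat{P_1}=\C$: take $N$ to be one leg of a copy of $R\oo R$ inside $M$ (with $R$ hyperfinite); then $N$ and $N'\cap M$ are both diffuse, so neither intertwines into $\C$. The reason is that the spectral-gap mechanism does not run off the hypothesis $N\nprec_M\widehat{P_i}$; it requires the strictly stronger hypothesis that $N$ is \emph{not amenable over} $\widehat{P_i}$ inside $M$ (non-intertwining does not exclude relative amenability, and relative amenability is exactly what destroys the gap). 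Accordingly the paper's dichotomy (Theorem \ref{nAmenOverEmbedd}) reads: if $Q$ is not amenable over $N_i=\widehat{P_i}$ and $Q'\cap M$ is a regular subfactor, then $Q'\cap M\prec_M N_i$. To produce even one index $i$ with $Q_1$ not amenable over $\widehat{P_i}$ one needs two further inputs your sketch omits: that $Q_1$ is non-amenable (the paper gets this from the absence of property Gamma), and the Popa--Vaes commuting-square theorem (Theorem \ref{PVlemma}), which shows that amenability of $Q_1$ over every $\widehat{P_i}$ would force amenability over $\bigcap_i\widehat{P_i}=\C$.

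There is a second gap in how you exploit the dichotomy: you repeatedly invoke ``the standard behavior of intertwining under commuting joins,'' i.e.\ that commuting subalgebras each satisfying $\,\cdot\prec_M C$ have join intertwining into $C$. This is false in general, and the correct statements in the tensor-product setting (Proposition 12 of \cite{OP03}, which the paper cites for exactly this bookkeeping) require real work. Finally, the output of the deformation/transversality argument is not directly $N'\cap M\prec_M\widehat{P_i}$: uniform convergence of the deformation first gives intertwining into $N_i\rtimes A_i^{H_i}$ or into $N_i\rtimes H_i$, and one must then use regularity of the commutant, finiteness of stabilizers, relative amenability of $N_i\rtimes A_i\wr H_0$ over $N_i$ for $H_0$ finite, and $[M:N_i\rtimes H_i]=\infty$ to land in $N_i$ itself; this is the second half of the proof of Theorem \ref{nAmenOverEmbedd} and is not routine. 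Once the dichotomy is restated in terms of relative amenability and these steps are supplied, the rest of your outline does follow \cite{OP03} as you indicate.
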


Also we have a natural generalization of this theorem  to unique
measure-equivalence decomposition results of finite products of
wreath product groups. Such results were achieved for products of
groups of the class $\mathcal{C}_{reg}$ by Monod and Shalom (Theorem
1.16 in \cite{MS06}), for products of bi-exact groups by Sako
(Theorem 4 in \cite{Sa09}), and for products of groups in
$\mathcal{QH}_{reg}$ by Chifan and Sinclair (Corollary C in
\cite{CS10}.)

\begin{cor}\label{maintheorem2}
Let $A_1, \dots, A_n$ be non-trivial amenable groups; $H_1, \dots,
H_n$ be non-amenable groups; and $K_1, \dots, K_m$ be groups such
that
\begin{equation*}
A_1 \wr H_1 \times \dots \times A_n \wr H_n\simeq_{ME}K1 \times \dots \times K_m
\end{equation*}
If $m \geq n$, then $n=m$, and after permutation of indices we have
that $A_i \wr H_i \simeq_{ME} K_i$.
\end{cor}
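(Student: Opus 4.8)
The plan is to translate the measure-equivalence hypothesis into a von Neumann algebraic statement to which Theorem \ref{maintheorem} and the machinery behind it apply, and then to translate the conclusion back. Write $\G = A_1 \wr H_1 \times \dots \times A_n \wr H_n$ and $\La = K_1 \times \dots \times K_m$. By Furman's characterization of measure equivalence in terms of stable orbit equivalence, $\G \simeq_{ME} \La$ is equivalent to the existence of free ergodic p.m.p. actions $\G \car (X,\mu)$ and $\La \car (Y,\nu)$ that are stably orbit equivalent; at the level of group-measure-space algebras this produces an isomorphism
\[
\theta \colon \big(L^\infty(X)\rtimes \G\big)^{t} \xrightarrow{\ \simeq\ } L^\infty(Y)\rtimes \La
\]
for some $t>0$, carrying the Cartan subalgebra $L^\infty(X)^{t}$ onto $L^\infty(Y)$. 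First I would exploit the freedom in choosing the $\G$-action: take $X=\prod_i X_i$ with a product of Bernoulli actions $A_i\wr H_i \car X_i$, so that $L^\infty(X)\rtimes \G = \bigotimes_{i=1}^n P_i$, where $P_i := L^\infty(X_i)\rtimes (A_i\wr H_i)$. Because the acting group is a wreath product $A_i^{(H_i)}\rtimes H_i$ with $A_i$ amenable and $H_i$ non-amenable, each $P_i$ still carries the s-malleable Bernoulli deformation coming from the $A_i^{(H_i)}$-coordinates together with the spectral-gap rigidity furnished by the non-amenable $H_i$. These are exactly the ingredients driving Theorem \ref{maintheorem}, so each $P_i$ is prime and the family $\{P_i\}$ enjoys the same unique-decomposition behaviour.

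The heart of the argument is to match this tensor decomposition with the direct-product structure of $\La$. The difficulty is that a free ergodic action of a direct product $K_1\times\dots\times K_m$ need not be a product action, so $L^\infty(Y)\rtimes\La$ is not a priori a tensor product and Theorem \ref{maintheorem} cannot be invoked verbatim. Instead I would work with the commuting family of diffuse subalgebras $\{\theta^{-1}(L(K_j))\}_{j}$ sitting inside $\bigotimes_i P_i$. Using Popa's intertwining-by-bimodules together with the deformation/rigidity dichotomy established for the $P_i$, I would show that the non-amenable core $L(H_i)\subseteq P_i$ of each tensor factor can be unitarily conjugated into the block generated by a single $K_j$, producing a well-defined map $i\mapsto j(i)$. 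Primeness of the $P_i$ and the hypothesis $m\ge n$ then force $j$ to be a bijection, whence $n=m$, and the conjugation aligns $P_i$, up to amplification, with the block associated to $K_{j(i)}$.

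Finally I would upgrade this $W^*$-level matching to the measured level. Since $\theta$ was chosen to preserve the group-measure-space Cartan subalgebras, and since the relevant factors have a \emph{unique} Cartan subalgebra up to unitary conjugacy (again a consequence of the malleable deformation plus spectral gap, exactly as in the proof of Theorem \ref{maintheorem}), the alignment above can be arranged to carry the Cartan $L^\infty(X_i)$ onto the Cartan of the block corresponding to $K_i$. This exhibits the individual actions $A_i\wr H_i \car X_i$ and $K_i\car Y_i$ as stably orbit equivalent, and Furman's theorem then yields $A_i \wr H_i \simeq_{ME} K_i$ after the permutation of indices, as claimed.

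The main obstacle is precisely the alignment step: because the $\La$-action on $Y$ need not split as a product, one cannot simply apply the tensor-decomposition theorem to both sides, and must instead control the position of the commuting subalgebras $L(K_j)$ relative to the tensor factors $P_i$ purely through intertwining and spectral-gap estimates. A secondary but essential difficulty is ensuring that this matching respects the Cartan subalgebras, since it is only a Cartan-preserving isomorphism—equivalently a stable orbit equivalence of the individual actions—that can be promoted back to measure equivalence of the individual groups $A_i\wr H_i$ and $K_i$.
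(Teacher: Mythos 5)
There is a genuine gap at the very first step. You write that you will ``exploit the freedom in choosing the $\G$-action'' and take $\G=A_1\wr H_1\times\dots\times A_n\wr H_n$ to act by a product of Bernoulli actions, so that $L^\infty(X)\rtimes\G$ splits as a tensor product of the $P_i$. Measure equivalence does not give you that freedom: the hypothesis only provides the \emph{existence} of some pair of stably orbit equivalent free ergodic p.m.p.\ actions of $\G$ and $\La$, and you must work with whatever pair the coupling hands you. It is not true that every free ergodic action of $\G$ (in particular a chosen Bernoulli one) is stably orbit equivalent to an action of $\La$ merely because $\G\simeq_{ME}\La$. You correctly identify that the $\La$-action need not be a product action, but the same problem afflicts the $\G$-side, and your proposed escape is not available. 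The paper's way around this is structural rather than a choice of action: Proposition \ref{ultraproduct} and Theorem \ref{nAmenOverEmbedd} are deliberately proved in the relative form $N\rtimes A\wr H$ for an \emph{arbitrary} finite von Neumann algebra $N$ with an arbitrary trace-preserving action, precisely so that $M=L^\infty(X)\rtimes\G$ can be viewed as $N_i\rtimes A_i\wr H_i$ (with $N_i$ absorbing $L^\infty(X)$ and the other wreath factors) no matter what the given action is. One then applies Proposition \ref{intersect} to find an $i$ with $L(K_1)$ non-amenable over $N_i$, runs Theorem \ref{nAmenOverEmbedd} (twice, passing through $N_i\rtimes H_i$ and then to $N_i$ after enlarging to the regular subalgebra $A\rtimes(K_2\times\dots\times K_m)$), and finishes by the argument of Corollary C in \cite{CS10}.

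A second, smaller overreach is your final step: you invoke uniqueness of the Cartan subalgebra in the relevant factors, asserting it follows ``exactly as in the proof of Theorem \ref{maintheorem}.'' No such uniqueness is established in this paper, and it does not follow from the malleable deformation plus spectral gap arguments used here; it is a substantially harder statement. It is also unnecessary: by the definition of measure equivalence adopted in the paper, the isomorphism $L^\infty(X)\rtimes\G\simeq(L^\infty(Y)\rtimes\La)^t$ already carries Cartan onto Cartan, and the passage back from the von Neumann algebraic intertwining to measure equivalence of the individual factors is exactly what the endgame of \cite{CS10} handles.
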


We prove these results by using deformation/rigidity theory. More
precisely, we use the malleable deformation for wreath product group
factors in \cite{cps}, combined with Popa's spectral gap rigidity
and intertwining by bimodules techniques.

\vskip .5in

 \noindent\textbf{Acknowledgements:} We would like to
thank Sorin Popa for suggesting this problem and his encouragement
throughout.

\section{Preliminaries}

\noindent\textbf{Intertwining by Bimodules:} Let us recall Popa's
intertwining by bimodules technique. This is a crucial tool for
locating subalgebras of II$_1$-factors, and is summed up in the
following theorem:

\begin{thm}[Popa, \cite{P1}]\label{intertwinedef}Let $P,Q\subset M$ be
 finite von Neumann algebras. Then the following are
equivalent:
\begin{enumerate}
  \item There exists nonzero projections $p\in P, q\in Q,$ a nonzero
  partial isometry $v\in M,$ and a *-homomorphism
  $\varphi:pPp\rightarrow qQq$ such that $vx=\varphi(x)v,
  \forall x\in pPp$
  \item There is a sub-$P-Q$-bimodule $\mathcal{H}\subset L^2(M)$ that
  is finitely generated as a right $Q$-module.
  \item There is no sequence $u_n\in\mathcal{U}(P)$ such that
  \begin{equation*}
      lim_{n\rightarrow\infty}\|E_Q(xu_ny)\|_2\rightarrow0, \forall x,y\in M
  \end{equation*}

\end{enumerate}

\end{thm}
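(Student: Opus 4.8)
The cleanest way to compare the three conditions is inside the Jones basic construction. Let $e_Q\in B(L^2(M))$ be the orthogonal projection onto $L^2(Q)$, let $J$ be the modular conjugation of $M$, and let $\langle M,e_Q\rangle=(JQJ)'\cap B(L^2(M))$ be the basic construction, endowed with its canonical semifinite trace $\mathrm{Tr}$ normalized by $\mathrm{Tr}(xe_Qy)=\tau(xy)$ for $x,y\in M$. The plan is to route all three conditions through the single statement
\[ (\ast)\qquad P'\cap\langle M,e_Q\rangle\ \text{contains a nonzero projection}\ f\ \text{with}\ \mathrm{Tr}(f)<\infty. \]
The equivalence of $(2)$ with $(\ast)$ is a formal dictionary: a closed subspace $\mathcal H\subseteq L^2(M)$ is a right $Q$-module precisely when its orthogonal projection lies in $\langle M,e_Q\rangle$, it is in addition left $P$-invariant precisely when that projection lies in $P'$, and it is finitely generated as a right $Q$-module precisely when the projection has finite $\mathrm{Tr}$. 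I would take $f=e_{\mathcal H}$ in one direction and $\mathcal H=f\,L^2(M)$ in the other.

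The analytic heart, and the step I expect to be the main obstacle, is the equivalence of $(\ast)$ with $(3)$. For $(3)\Rightarrow(\ast)$ I would run Popa's averaging argument. Since $\mathrm{Tr}(ue_Qu^*)=\mathrm{Tr}(e_Q)=1$ for every $u\in\mathcal U(P)$, the set $\{ue_Qu^*:u\in\mathcal U(P)\}$ lies in the Hilbert space $L^2(\langle M,e_Q\rangle,\mathrm{Tr})$; let $\mathcal C$ be its $\|\cdot\|_{2,\mathrm{Tr}}$-closed convex hull and let $a$ be the unique element of $\mathcal C$ of minimal $\|\cdot\|_{2,\mathrm{Tr}}$-norm. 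Uniqueness of the minimizer forces $uau^*=a$ for all $u\in\mathcal U(P)$, so $a\in P'\cap\langle M,e_Q\rangle$; moreover $a\ge 0$ and $\mathrm{Tr}(a)\le 1$ by lower semicontinuity of $\mathrm{Tr}$. If $a\neq 0$ then a spectral projection $f=\mathbf 1_{[\delta,\infty)}(a)$ is a nonzero finite-trace projection in $P'\cap\langle M,e_Q\rangle$, giving $(\ast)$. The crux is that $a=0$ is incompatible with $(3)$: using the identity $\mathrm{Tr}((ue_Qu^*)(b^*e_Qb))=\|E_Q(bu)\|_2^2$ and its variants pairing $ue_Qu^*$ against the total family of vectors $xe_Qy$, the vanishing $a=0$ means there are convex combinations of the $ue_Qu^*$ whose $\mathrm{Tr}$-pairings against each $xe_Qy$ are arbitrarily small, and a standard maximality/diagonalization argument over a countable $\|\cdot\|_2$-dense set of $x,y$ then produces a single sequence $u_n\in\mathcal U(P)$ with $\|E_Q(xu_ny)\|_2\to 0$ for all $x,y\in M$, contradicting $(3)$. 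The reverse implication $(\ast)\Rightarrow(3)$ is the easy contrapositive: if such a sequence $u_n$ existed, then any finite-trace $f\in P'\cap\langle M,e_Q\rangle$ satisfies $u_nfu_n^*=f$, while the quantities $\mathrm{Tr}(f\,u_n^*(be_Qb^*)u_n)=\mathrm{Tr}(f\,be_Qb^*)$ simultaneously tend to $0$, forcing $\mathrm{Tr}(f\,be_Qb^*)=0$ for all $b$ and hence $f=0$.

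Finally I would translate $(\ast)$ into the intertwiner $(1)$. For $(1)\Rightarrow(\ast)$ I would exhibit the bimodule directly. After normalizing so that $v=qvp$, taking adjoints in $vx=\varphi(x)v$ gives $xv^*=v^*\varphi(x)$ for $x\in pPp$, so left multiplication by $pPp$ on $v^*$ is absorbed into right multiplication by $Q$. The map $L^2(P)p\to L^2(M)$, $\xi p\mapsto\widehat{\xi p v^*}$, then intertwines the right $pPp$-action with the right $\varphi(pPp)$-action; since $L^2(P)p$ is finitely generated as a right $pPp$-module (its dimension being $\tau(p)^{-1}$), its image, and hence the $P$-$Q$-subbimodule $\mathcal H$ it generates (the $\|\cdot\|_2$-closed linear span of $Pv^*Q$), is finitely generated as a right $Q$-module, which is $(2)$ and therefore $(\ast)$. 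For the converse $(\ast)\Rightarrow(1)$, a finitely generated right $Q$-module $\mathcal H=fL^2(M)$ embeds $Q$-linearly as a submodule of $(L^2(Q))^{\oplus n}$ for some $n$; the commuting left $P$-action then becomes a normal $*$-homomorphism of $P$ into the compression $e(M_n(\mathbb C)\otimes Q)e$ by the projection $e$ corresponding to the embedding, and compressing this homomorphism by a suitable projection and composing with the embedding isometry yields projections $p\in P$, $q\in Q$, a partial isometry $v\in M$, and a $*$-homomorphism $\varphi:pPp\to qQq$ with $vx=\varphi(x)v$. Both of these translations are the standard Pimsner--Popa-basis dictionary, so the genuine difficulty of the theorem is concentrated in the averaging step of the previous paragraph.
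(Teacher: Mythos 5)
The paper does not prove this statement: it is quoted verbatim as Popa's intertwining-by-bimodules theorem from \cite{P1}, so there is no in-paper argument to compare against. Your proposal is, in outline, exactly Popa's original proof --- routing everything through a finite-trace projection in $P'\cap\langle M,e_Q\rangle$, with the convex-hull/minimal-norm averaging as the analytic core --- and the steps you describe are correct. The only place where the sketch is genuinely thin is the extraction of a single sequence $(u_n)$ from $a=0$: since the pairing $\mathrm{Tr}\bigl((ue_Qu^*)(xe_Qx^*)\bigr)=\|E_Q(xu)\|_2^2$ only directly controls quantities of that special form, you need the standard reduction of $\|E_Q(xuy)\|_2$ for general $x,y$ to such pairings (summing over finite families $x_je_Qx_j^*$) before the diagonalization over a countable dense set; you gesture at this but it is the one step a referee would ask you to write out.
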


If any of the above conditions hold we say that \textit{a corner of
P embedds in Q inside M}, denoted $P\prec_MQ$.

 Following \cite{OP1} we have the following definition:
\begin{defn}
Let $P, Q\subset M$ be finite von Neuman algebras.
We say that \textit{$P$ is amenable over $Q$ inside M}, which we denote
$P\lessdot_MQ$, if there is a $P$-central state, $\varphi$, on
$\langle M, e_Q\rangle$ such that $\varphi|_M=\tau$, where $\tau$ is
the trace on $M$.
\end{defn}

Let us note that by Theorem 2.1 in \cite{OP1} $P\lessdot_MQ$ is
equivalent to $L^2(P)\prec\bigoplus L^2(\langle M, e_Q\rangle)$ as
$P$-bimodules. Further, if $P\prec_M Q$ then $L^2(M)$ contains a sub
$P$-$Q$-module, $\mathcal{H}$, that is finitely generated as a right
$Q$ module. Therefore, the projection onto this module will commute
with the right action of $Q$ and will have finite trace. Therefore,
it will be a vector in $L^2(\langle M, e_N\rangle).$ Further, it
will also commute with $P$, so if we look at $L^2(\langle M,
e_N\rangle)$ as a $P$-bimodule, it will contain a central vector.
Since strong containment implies weak containment we get the
following observation.

\begin{prop}\label{intertwineimpliesamenableover}
Let $P,Q\subset M$ be von Neumann algebras. If $P\prec_M Q$ then
$P\lessdot_M Q$.
\end{prop}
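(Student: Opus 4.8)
The plan is to build the required $P$-central structure on the basic construction $\langle M,e_Q\rangle$ directly out of the intertwining data, following the discussion preceding the statement. First I would invoke condition (2) of Theorem \ref{intertwinedef}: since $P\prec_M Q$, there is a sub-$P$-$Q$-bimodule $\mathcal{H}\subseteq L^2(M)$ that is finitely generated as a right $Q$-module. Let $e$ denote the orthogonal projection of $L^2(M)$ onto $\mathcal{H}$. Because $\mathcal{H}$ is invariant under the right $Q$-action, which is implemented by the von Neumann algebra $JQJ$ (so that $\mathcal{H}^\perp$ is invariant as well), the projection $e$ commutes with $JQJ$; that is, $e\in (JQJ)'\cap B(L^2(M))=\langle M,e_Q\rangle$.

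Next I would record the two properties of $e$ inside $\langle M,e_Q\rangle$. On one hand, finite generation of $\mathcal{H}$ as a right $Q$-module means it has finite right $Q$-dimension, which is exactly the statement that $\operatorname{Tr}(e)<\infty$ for the canonical semifinite trace $\operatorname{Tr}$ on $\langle M,e_Q\rangle$, normalized by $\operatorname{Tr}(x e_Q y)=\tau(xy)$ for $x,y\in M$. Thus $e$ determines a genuine vector $\hat e\in L^2(\langle M,e_Q\rangle,\operatorname{Tr})$. On the other hand, since $\mathcal{H}$ is also a left $P$-module, the same self-adjointness argument shows that $e$ commutes with the left action of $P\subseteq M\subseteq\langle M,e_Q\rangle$. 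Viewing $L^2(\langle M,e_Q\rangle)$ as a $P$-bimodule in the natural way, the identity $ae=ea$ for $a\in P$ says precisely that $\hat e$ is a nonzero $P$-central vector.

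The final step is to convert this central vector into the weak containment appearing in the Ozawa--Popa characterization. I would compute the coefficient $a\mapsto\langle a\hat e,\hat e\rangle=\operatorname{Tr}(eae)$ and check, using the commutation $ae=ea$, that $a\mapsto\operatorname{Tr}(eae)$ is a positive trace on $P$; when $P$ is a factor it must therefore be a scalar multiple of $\tau$, so that after normalizing, $\hat e$ becomes a tracial $P$-central vector and the trivial $P$-bimodule $L^2(P)$ embeds as a sub-bimodule of $L^2(\langle M,e_Q\rangle)$. Since strong containment implies weak containment, this yields $L^2(P)\prec\bigoplus L^2(\langle M,e_Q\rangle)$ as $P$-bimodules, which by Theorem 2.1 of \cite{OP1} is equivalent to $P\lessdot_M Q$.

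I would flag this third step as the only genuinely delicate point. The vector $\hat e$ is automatically central, but its associated vector state need not restrict to the trace $\tau$ on $M$, so one cannot simply read off a trace-preserving $P$-central state on $\langle M,e_Q\rangle$ from it. The trace computation above is what repairs this defect, and it is exactly here that routing through weak containment of the trivial bimodule, rather than attempting to produce the central state directly, becomes essential. Everything else---identifying $e$ with an element of $\langle M,e_Q\rangle$, its finiteness of trace, and its commutation with $P$---is a routine unpacking of the bimodule structure of $L^2(M)$ and of the basic construction.
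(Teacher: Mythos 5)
Your argument follows the same route as the paper's: both pass from $P\prec_M Q$ to the orthogonal projection $e$ onto a sub-$P$-$Q$-bimodule of $L^2(M)$ that is finitely generated over $Q$, observe that $e$ lies in $\langle M,e_Q\rangle$, has finite trace, and commutes with $P$, hence yields a $P$-central vector $\hat e$ in $L^2(\langle M,e_Q\rangle)$, and then conclude via ``strong containment implies weak containment'' together with Theorem 2.1 of \cite{OP1}. You are right---and the paper is silent on this---that the delicate point is the tracial condition: weak containment of the trivial $P$-bimodule $L^2(P)$ requires central vectors whose coefficient states restrict to $\tau$ on $P$ (and the definition of $P\lessdot_M Q$ even demands a central state restricting to $\tau$ on all of $M$), whereas $\hat e$ a priori only produces \emph{some} positive trace $a\mapsto\operatorname{Tr}(eae)$ on $P$.

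The problem is that your repair closes this gap only when $P$ is a factor, an assumption that appears neither in the statement nor at the point where the proposition is applied (in Theorem \ref{nAmenOverEmbedd} it is invoked for an arbitrary subalgebra $Q$). This is not a removable blemish, because in the stated generality the implication actually fails: take $Q=\C$ inside a II$_1$ factor $M$ and $P=\C p_1\oplus Rp_2$ with $R\subset p_2Mp_2$ a nonamenable subfactor. Then $P\prec_M\C$ since the corner $p_1Pp_1$ embeds, yet any $P$-central state on $\langle M,e_{\C}\rangle=B(L^2(M))$ restricting to $\tau$ on $M$ could be compressed by the central projection $p_2\in P$ to produce an $R$-central state on $B(p_2L^2(M))$ restricting to the trace on $p_2Mp_2$, forcing $R$ to be amenable. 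What your trace computation genuinely proves, after cutting $e$ by an appropriate central projection, is the standard corrected statement that $Pz\lessdot_M Q$ for some nonzero $z\in\mathcal{Z}(P'\cap M)$; to obtain the proposition exactly as stated one must either assume $P$ is a factor (your case) or weaken the conclusion in this way. So: same approach as the paper, a correctly diagnosed gap that the paper elides, but a patch that does not yet cover the generality in which the result is stated and used.
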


\noindent\textbf{Deformation of Wreath Products:} Let $A$ and $H$ be
discrete groups. Then following standard notation we let $A\wr
H=A^H\rtimes H$ denote the standard wreath product. Throughout this
paper we will consider trace preserving actions of $A\wr H$ on a
finite von Neuman algebra $N$, and we consider the resulting crossed
product algebra $M=N\rtimes A\wr H$.

Let $\tilde{A}= A\ast\mathbb{Z}$. If we let $u\in L(\tilde{A})$
denote the Haar unitary that generates $L(\mathbb{Z})$ then for
every $t\in \mathbb {R}$, we define $u^t\doteq\exp(ith)\in L\mathbb
{Z}$. This allows us to define $\theta_t\in
\text{Aut}(L(\tilde{A}))$ by $\theta_t(x)=u^tx(u^*)^t$. By applying
this automorphism in each coordinate we can get an automorphism of
$L(\tilde{A}^H)$. Since the action of $H$ is by permuting the
coordinates, it commutes with $\theta_t$ and so we can extend it to
$L(\tilde{A}\wr H)$. If we now declare that the Haar unitaries in
each coordinate do not act on the algebra $N$, then we can extend to
an automorphism, which we still denote by $\theta_t$ of
$\tilde{M}=N\rtimes\tilde{A}\wr H$.

I is easy to see that $\lim_{t\rightarrow 0}\|u^t-1\|_2=0$ and hence
we have $\lim_{t\rightarrow0}\|\theta_t(x)-x\|_2=0$ for all $x\in
\tilde{M}$. Therefore, the path $(\theta_t)_{t\in\mathbb R}$ is a
deformation by automorphisms of $\tilde M$.

Next we show that $\theta_t$ admits a ``symmetry'', i.e. there
exists an automorphism $\beta$ of $\tilde{M}$ satisfying the
following relations:

$$\beta^2=1,\text{ } \beta_{|_M}=id_{|_M},\text{ }
\beta\theta_t\beta= \theta_{-t}, \text{ for all } t\in \mathbb {R}.
$$

\noindent To see this, first define $\beta_{|_{LA^I}}=id_{|_{LA^I}}$
and then for every $h\in H$ we let $(u)_h$ be the element in
$L\tilde{A}^H$ whose $h^{th}$-entry is $u$ and $1$ otherwise. On
elements of this form we define $\beta((u)_h)=(u^*)_h$, and since
$\beta$ commutes with the actions of $H$ on $A^H$, it extends to an
automorphism of $L(\tilde{A}\wr H)$ by acting identically on $L(H)$.
Finally, the automorphism $\beta$ extends to an automorphism of
$\tilde{M}$, still denoted by $\beta$, which acts trivially on $A$.

Let us note that, with this choice of $\beta$, $\theta_t$ is an
\textit{s-malleable deformation} of $\tilde{M}$ in the sense of
\cite{P1}. In fact, this is the same deformation that the first
author used in \cite{cps}, and is inspired by similar free malleable
deformations in \cite{Po01a, IPP, Io06}, so we refer to this
previous work for additional discussion.

%%%%%%%%%%%%%%%%%%%%%%%%%%%%%%%%%%%%%%%%%%%%%%%%%%%%

\section{Intertwining Techniques for Wreath Products}

In this section we prove the necessary intertwining results for
II$_1$ factors arising from wreath product groups that we will need
in order to prove our desired uniqueness of of tensor product
decomposition.

The following proposition is a relative version of Lemma 4.2 in
\cite{cps}, and will follow a similar proof.

\begin{prop}\label{ultraproduct} Let $N$ be a finite von Neumann algebra. Let $A, H$ be
groups with $A$ non-trivial amenable and $H$ non-amenable. Let
$Q\subset N\rtimes A\wr H = M$ be an inclusion of von Neumann
algebras. Assume $Q$ is not amenable over $N$ inside $M$ then
$Q'\cap \tilde{M}^\omega\subseteq M^\omega$.

\end{prop}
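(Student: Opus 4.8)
The plan is to argue by contraposition through a relative bimodule computation in the spirit of Lemma~4.2 of \cite{cps}, showing that if $Q' \cap \tilde M^\omega \not\subseteq M^\omega$ then $Q \lessdot_M N$. First I would reduce to a single clean vector: take $x \in Q' \cap \tilde M^\omega$, and observe that since $Q \subseteq M$ the conditional expectation $E_{M^\omega}$ is $Q$-bimodular, so that $x_0 := x - E_{M^\omega}(x)$ still lies in $Q' \cap \tilde M^\omega$ and is orthogonal to $L^2(M^\omega)$. It therefore suffices to prove $x_0 = 0$; suppose not. Then $x_0$ is a nonzero bounded $Q$-central vector inside the $M$-$M$ bimodule $\mathcal H := L^2(\tilde M^\omega) \ominus L^2(M^\omega)$.

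The key structural input is the weak containment $\mathcal H \prec L^2(\langle M, e_N\rangle)^{\oplus \infty}$ of $M$-$M$ bimodules, the relative analogue of the coarseness computation in \cite{cps}. The point is that the deformation $\theta_t$ acts only on the $\tilde A^H$-coordinates while fixing both $N$ and $L(H)$, so any vector orthogonal to $M^\omega$ must carry at least one of the new Haar unitaries $u$; these behave freely over the untouched base $N$, and the $M$-bimodule they generate is a multiple of the $N$-coarse bimodule $L^2(\langle M, e_N\rangle)$. Granting this, the $Q$-central vector $x_0 \in \mathcal H$ transfers, through the weak containment and the trace-preserving structure of $\tilde M^\omega$, to a $Q$-central state $\varphi$ on $\langle M, e_N\rangle$ with $\varphi|_M = \tau$. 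By the definition of $\lessdot_M$ recalled in the Preliminaries (and the characterization from Theorem~2.1 of \cite{OP1}), this is exactly $Q \lessdot_M N$, contradicting the hypothesis; hence $x_0 = 0$ and $Q' \cap \tilde M^\omega \subseteq M^\omega$.

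The hard part will be establishing the weak containment $\mathcal H \prec L^2(\langle M, e_N\rangle)^{\oplus \infty}$ at the level of the ultrapower rather than for $\tilde M$ itself: one must check that orthogonality to $M^\omega$, and not merely to $M$, still drives the vectors into the coarse-over-$N$ regime, uniformly along the ultrafilter. This is where the s-malleability of $\theta_t$ is essential. I would invoke Popa's transversality inequality for $\theta_t$, together with the symmetry $\beta$ satisfying $\beta \theta_t \beta = \theta_{-t}$, in order to replace the approximately $Q$-central sequences that arise in $\tilde M^\omega$ by genuine vectors of $\mathcal H$ and to keep the bimodule estimates uniform, following the scheme of Lemma~4.2 of \cite{cps}. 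Controlling this ultrapower passage, rather than the final intertwining step, is the main obstacle.
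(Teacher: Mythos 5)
Your overall architecture matches the paper's: argue the contrapositive, produce a $Q$-almost-central vector orthogonal to $M$, show the relevant $M$-bimodule is weakly contained in a multiple of $L^2(\langle M, e_N\rangle)$, and conclude $Q \lessdot_M N$. But there is a genuine gap at the step you treat as routine. You assert that the Haar unitaries ``behave freely over the untouched base $N$'' so that the $M$-bimodule generated by a vector orthogonal to $M$ is a multiple of the $N$-coarse bimodule. That is false as stated. The correct decomposition of $L^2(\tilde M)\ominus L^2(M)$ is into cyclic $M$-bimodules $\overline{M\tilde\eta_s M}$ indexed by reduced words $\tilde\eta_s$ in $\tilde A^H$, and each such bimodule is isomorphic to $L^2(\langle M, e_{K_s}\rangle)$ where $K_s = N\rtimes(A\wr_{H\setminus\Delta_s}\mathrm{Stab}_H(\tilde\eta_s))$ and $\Delta_s$ is the (finite) support of $\tilde\eta_s$. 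This $K_s$ strictly contains $N$: it carries a full copy of $A^{H\setminus\Delta_s}$. To pass from coarseness over $K_s$ to weak containment in multiples of $L^2(\langle M, e_N\rangle)$ one must use that $K_s$ is amenable over $N$, which is exactly where the hypotheses that $A$ is amenable and that $H$ acts freely on itself (so the stabilizers are finite) enter. Your sketch never invokes the amenability of $A$, so as written it would ``prove'' the proposition for arbitrary $A$; that should be a red flag, since the amenability of $A$ is the essential hypothesis driving the whole result.

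A secondary misdiagnosis: you identify the ultrapower passage as the main obstacle and propose to invoke transversality and the symmetry $\beta$. Neither is needed for this proposition. The standard argument (Lemma 5.1 of \cite{P-gap}, which the paper cites) converts $Q'\cap\tilde M^\omega\nsubseteq M^\omega$ directly into the weak containment $L^2(Q)\prec L^2(\tilde M)\ominus L^2(M)$ of $Q$-bimodules, by taking the sequence $\xi_n = x_n - E_M(x_n)$ for a representative $(x_n)$ of a $Q$-commuting element not in $M^\omega$; these are almost $Q$-central vectors of norm bounded away from zero in $L^2(\tilde M)\ominus L^2(M)$ itself, so no uniform control along the ultrafilter is required. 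Transversality and $\beta$ are used later, in the application of this proposition (to upgrade spectral gap to uniform convergence of $\theta_t$ on the unit ball), not in its proof. The real content you need to supply is the explicit bimodule decomposition over the subalgebras $K_s$ and the chain of weak containments $L^2(\langle M, e_{K_s}\rangle) \simeq L^2(M)\otimes_{K_s}L^2(K_s)\otimes_{K_s}L^2(M) \prec L^2(M)\otimes_N L^2(M)$ obtained by inducing the $K_s$-bimodule weak containment $L^2(K_s)\prec\bigoplus L^2(K_s)\otimes_N L^2(K_s)$ up to $M$.
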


\begin{proof}
As mentioned above this proof follows closely the proof of Lemma 4.2
in \cite{cps} as well as Lemma 5.1 in \cite{P-gap} and other similar
results in the literature.

We will prove the contrapositive so let us assume that $Q'\cap
\tilde{M}^\omega\nsubseteq M^\omega$ Then proceeding as in Lemma 5.1
in \cite{P-gap} We see that

$$L^2(Q)\prec L^2(\tilde{M})\ominus L^2(M)$$

as $Q$-bimodules. Now we decompose $L^2(\tilde{M})\ominus L^2(M)$ as
an $M$-bimodule.

One can see that the above $M$-bimodule can be written as a direct
sum of $M$-bimodules $\overline{M\tilde{\eta}_sM}^{\|\cdot\|_2}$ ,
where the cyclic vectors $\tilde{\eta}_s$ correspond to an
enumeration of all elements of $\tilde{A}^H$ whose non-trivial
coordinates start and end with non-zero powers of $u$.

Next, for every $s$, we denote by $\eta_s$ the element of $A^H$ that
remains from $\tilde{\eta}_s$ after deleting all nontrivial powers
of $u$. Also for every $s$ let $\Delta_s\subset H$ be the support of
$\tilde{\eta}_s$ and observe that if
$\textit{Stab}_H(\tilde{\eta}_s)$ denotes the stabilizing group of
$\tilde{\eta}_s$ inside $H$ then we have $\textit{Stab}_H$
$(\tilde{\eta}_s)(H\setminus\Delta_s)\subset H\setminus\Delta_s$.

 Hence we can consider the
von Neumann algebra $K_s =
N\rtimes(A\wr_{H\setminus\Delta_s}\textit{Stab}_H$
$(\tilde{\eta}_s))$ and using similar computations as in Lemma 5.1
of \cite{P-gap}, one can easily check that the map
$x\tilde{\eta}_sy\rightarrow x\eta_se_{K_s}y$ implements an
$M$-bimodule isomorphism between
$\overline{M\tilde{\eta}_sM}^{\|\cdot\|_2}$ and $L^2(\langle M,
e_{K_s}\rangle).$

Therefore, as $M$-bimodules, we have the following isomorphism

$$L^2(\tilde{M})\ominus L^2(M) = \bigoplus L^2(\langle M, e_{K_s}\rangle).$$

Thus we can get the following weak containment of $Q$-bimodules

$$L^2(Q)\prec\bigoplus L^2(\langle M, e_{K_s}\rangle).$$

 Notice that, since $\Delta_s$ is finite, and the action of $H$
on itself is free, then $\textit{Stab}_H(\tilde{\eta}_s)$ is finite
for all $s$. Also, since $A$ is an amenable group we have that
$K_s\lessdot_NN$ for all $s$. Thus for all $s$ we have the following
weak containment of $K_s$-bimodules

$$L^2(K_s)\prec \bigoplus L^2(\langle K_s, e_N\rangle)\simeq\bigoplus L^2(K_s)\otimes_NL^2(K_s)$$

Now if we induce to $M$-bimodules and restrict to $Q$-bimodules and
use continuity of weak containment under induction and restriction
we get the following inclusions of $Q$-bimodules:

\begin{eqnarray*}
L^2(Q)
&\prec&\bigoplus L^2(\langle M, e_{K_s}\rangle)\\
&\simeq&\bigoplus L^2(M)\otimes_{K_s}L^2(K_s)\otimes_{K_s}L^2(M)\\
&\prec&\bigoplus L^2(M)\otimes_{K_s}L^2(K_s)\otimes_NL^2(K_s)\otimes_{K_s}L^2(M)\\
&\simeq&\bigoplus L^2(M)\otimes_NL^2(M)\\
&\simeq&\bigoplus L^2(\langle M, e_N\rangle)
\end{eqnarray*}

Thus $Q\lessdot_MN$

\end{proof}

We finish this section with a final theorem which allows us to
locate regular subfactors with large commutant.

\begin{thm}\label{nAmenOverEmbedd}
Let $N$ be a finite von Neumann algebra. Let $A$ and $H$ be groups
with $A$ non-trivial amenable and $H$ non-amenable. Let $Q\subset
N\rtimes A\wr H = M$ be a subalgebra that is not amenable over $N$.
Let $P=Q'\cap M.$ If $P$ is a regular subfactor of $M$ then
$P\prec_M N$.
\end{thm}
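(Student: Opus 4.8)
The plan is to run Popa's spectral gap rigidity argument against the s-malleable deformation $(\theta_t, \beta)$ of $\tilde{M} = N \rtimes \tilde{A} \wr H$, which fixes $N$ pointwise. The strategy has two steps: first show that $\theta_t$ converges to the identity \emph{uniformly} on the unit ball $(P)_1$, using the spectral gap furnished by Proposition \ref{ultraproduct}; then, using that $P$ is regular, promote this uniform convergence along the normalizer and deduce $P \prec_M N$, the only competing alternative being uniform convergence on all of $(M)_1$, which I will rule out directly.

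For the first step, recall Popa's transversality inequality for s-malleable deformations, $\|\theta_{2t}(x) - x\|_2 \le 2\,\|\theta_t(x) - E_M(\theta_t(x))\|_2$ for $x \in M$, which reduces the claim to showing $\sup_{x \in (P)_1} \|\theta_t(x) - E_M(\theta_t(x))\|_2 \to 0$ as $t \to 0$. If this failed there would be $\delta > 0$, $t_n \to 0$ and $x_n \in (P)_1$ with $\|\theta_{t_n}(x_n) - E_M(\theta_{t_n}(x_n))\|_2 \ge \delta$. Set $y = (\theta_{t_n}(x_n))_n \in \tilde{M}^\omega$. For $b \in Q$ we have $[x_n, b] = 0$ since $x_n \in P = Q' \cap M$, hence $[\theta_{t_n}(x_n), b] = [\theta_{t_n}(x_n),\, b - \theta_{t_n}(b)]$, whose $\|\cdot\|_2$-norm tends to $0$ because $\theta_{t_n}(b) \to b$ while $\|\theta_{t_n}(x_n)\| \le 1$. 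Thus $y \in Q' \cap \tilde{M}^\omega$, and Proposition \ref{ultraproduct} forces $y \in M^\omega$, i.e. $\lim_\omega \|\theta_{t_n}(x_n) - E_M(\theta_{t_n}(x_n))\|_2 = 0$, contradicting the lower bound. This establishes uniform convergence on $(P)_1$.

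For the second step I would invoke the standard normalizer dichotomy for malleable deformations (\cite{P-gap}): since $\theta_t \to \mathrm{id}$ uniformly on $(P)_1$, either $P \prec_M N$, or $\theta_t \to \mathrm{id}$ uniformly on $(\mathcal{N}_M(P)'')_1$. The mechanism is to consider, for a normalizing unitary $v$, the element $v^*\theta_t(v)$, and to use $vPv^* = P$ together with transversality and the symmetry $\beta$ to ``untwist'' $\theta_t(v)$ back onto $v$; if no corner of $P$ intertwines into $N$ the error terms vanish uniformly, and uniform convergence on the normalizing unitaries then spreads to the $\|\cdot\|_2$-closed algebra they generate. By regularity $\mathcal{N}_M(P)'' = M$, so the second alternative reads $\theta_t \to \mathrm{id}$ uniformly on $(M)_1$. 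I expect this normalizer/untwisting step to be the main obstacle, as it is precisely where the symmetry $\beta$ and the finer transversality estimates must be combined.

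Finally I would rule out uniform convergence on $(M)_1$ directly, using that $H$ is infinite (being non-amenable). Fix a nontrivial $a \in A$ and, for distinct $h_1, \dots, h_k \in H$, form the unitary $w_k = a_{h_1} \cdots a_{h_k} \in \mathcal{U}(M)$, where $a_h$ denotes $a$ placed in coordinate $h$. Since $\theta_t$ acts coordinatewise and $u$ is free from $A$ in $\tilde{A} = A * \mathbb{Z}$, the overlap factorizes as $\langle \theta_t(w_k), w_k \rangle = \rho(t)^k$ with $\rho(t) = \tau(a^* u^t a (u^*)^t)$ satisfying $|\rho(t)| < 1$ for $t \ne 0$; hence $\|\theta_t(w_k) - w_k\|_2 \to \sqrt{2}$ as $k \to \infty$, which contradicts uniform convergence on $(M)_1$ for every fixed $t \ne 0$. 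Therefore the first alternative must hold, namely $P \prec_M N$, as desired.
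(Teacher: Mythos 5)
Your first step (spectral gap via Proposition \ref{ultraproduct} plus Popa's transversality inequality, giving uniform convergence of $\theta_t$ on $(P)_1$) is correct and is exactly how the paper's proof begins, and your closing computation with the words $a_{h_1}\cdots a_{h_k}$ is also correct. The gap is the middle step. The dichotomy ``either $P\prec_M N$ or $\theta_t\to\mathrm{id}$ uniformly on $(\mathcal{N}_M(P)'')_1$'' is not the dichotomy this deformation satisfies, and the untwisting mechanism you sketch cannot deliver it. Note that $\theta_t$ is \emph{trivial} on $N\rtimes H$ and converges uniformly on the unit ball of $N\rtimes A^F$ for every finite $F\subset H$; accordingly, the correct output of the untwisting argument (Theorem 3.1 of \cite{cps}, which is what the paper invokes) is the two-branch alternative $P\prec_M N\rtimes A^H$ or $P\prec_M N\rtimes H$. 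Neither branch is ``$P\prec_M N$'', and the obstruction to propagating uniform convergence along the normalizer is governed by embeddings into these larger subalgebras (the nonvanishing of $\beta(v^*)v$ is controlled by conditional expectations onto $N\rtimes A\wr H_0$ and $N\rtimes H$), not by embeddings into $N$. The hypothesis $P\nprec_M N$ alone is far too weak to make your ``error terms vanish uniformly''.

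Concretely, two cases are left open by your argument. If $P\prec_M N\rtimes H$ but $P\nprec_M N$, your step two would have to force uniform convergence on all of $(M)_1$; but $\theta_t$ fixes $N\rtimes H$ pointwise, so nothing pushes the convergence beyond $P$. The paper instead runs a quasi-normalizer computation ($\varphi(x)vuv^*=vuv^*\varphi(u^*xu)$ for $u\in\mathcal{N}_{pMp}(pPp)$, combined with Lemma 2.4(2) of \cite{cps}) to get $vuv^*\in N\rtimes H$, hence $M\prec_M N\rtimes H$ by regularity, contradicting $[M:N\rtimes H]=\infty$ since $A$ is nontrivial. If $P\prec_M N\rtimes A^H$ but $P\nprec_M N$, the paper deduces $Q\prec_M N\rtimes A\wr H_0$ for some finite $H_0\subset H$ and hence $Q\lessdot_M N$, contradicting the standing assumption on $Q$; observe that your proposal never uses ``$Q$ is not amenable over $N$'' after step one, whereas the proof needs it a second time precisely here. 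These two branch analyses are the substance of the theorem and are absent from your proposal.
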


\begin{proof}
Applying Proposition \ref{ultraproduct} and following the proof of
Theorem 4.1 in \cite{cps} we see that the deformation $\theta_t$
converges uniformly on the unit ball of $P$, and thus by Theorem 3.1
in \cite{cps} we have that $P\prec_M N\rtimes A^H$ or $P\prec_M
N\rtimes H$.

Following the same argument as Theorem 4.1 \cite{cps} if we assume
that $P\prec_M N\rtimes A^H$ and $P\nprec_M N$ then we get $Q\prec_M
N\rtimes A\wr H_0$ for some finite subgroup $H_0\subset H$. Since
$A$ is amenable and $H_0$ is finite then $N\rtimes A\wr
H_0\lessdot_MN$. So since $Q\prec_M N\rtimes A\wr H_0$ then by
Proposition \ref{intertwineimpliesamenableover} we have $Q\lessdot_M
N\rtimes A\wr H_0$. Then by part 3 of Proposition 2.4 in \cite{OP1}
we have that $Q\lessdot_M N$ contradicting our assumption.

Thus $P\prec_M N\rtimes H$. Therefore, by Theorem
\ref{intertwinedef}, there exists nonzero projections $p\in P, q\in
N\rtimes H,$ a nonzero partial isometry $v\in M,$ and a
*-homomorphism $\varphi:pPp\rightarrow q(N\rtimes H)q$ such that
$vx=\varphi(x)v, \forall x\in pPp$. Furthermore we have that
$v^*v=p$ and $vv^*=\hat{q}\in\varphi(pPp)'\cap qMq$. Also, by Lemma
3.5 in \cite{P1} we know that $pPp$ is a regular subalgebra of
$pMp$.

Then for all $u\in\mathcal{N}_{pMp}(pPp)$ let us calculate:

\begin{eqnarray*}
\varphi(x)vuv^*
&= vxuv^*\\
& = vu(u^*xu)v^*\\
& = vuv^*v(u^*xu)v^*\\
& = vuv^*\varphi(u^*xu)vv^*\\
& = vuv^*\varphi(u^*xu)
\end{eqnarray*}

Now assume that $P\nprec_M N$, then by part (2) of Lemma 2.4 in
\cite{cps} we have that $vuv^*\in N\rtimes H$. Since $pPp$ is
regular in $pMp$ we would then get that $M\prec_M N\rtimes H$.
However, this is impossible since the fact that $A$ is nontrivial
implies that $[M:N\rtimes H]=\infty$.

\end{proof}

%%%%%%%%%%%%%%%%%%%%%%%%%%%%%%%%%%%%%%%%%%%%%%%%%%%%%%

\section{Proof of Main Theorems}

In this section we prove our main theorem. Our main technical tool
is the following, which is proposition 2.7 in \cite{PV11}. Before we
state the result let us recall that two von Neumann subalgebras
$M_1, M_2\subset M$ of a finite von Neumann algebra $M$ are said to
form a commuting square if $E_{M_1}E_{M_2} = E_{M_2}E_{M_1}$.

\begin{thm}[Popa-Vaes, \cite{PV11}]\label{PVlemma}Let $(M, \tau)$ be a tracial von Neumann algebra with von Neumann subalgebras
$M_1,M_2\subset M$. Assume that $M_1$ and $M_2$ form a commuting
square and that $M_1$ is regular in $M$. If a von Neumann subalgebra
$Q\subset pMp$ is amenable relative to both $M_1$ and $M_2$, then
$Q$ is amenable relative to $M_1\cap M_2.$
\end{thm}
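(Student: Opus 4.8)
The plan is to translate relative amenability into the language of Connes fusion and weak containment of bimodules, exactly as in the remark following the definition of $\lessdot_M$ in Section~1, and then to exploit the commuting square relation $E_{M_1}E_{M_2}=E_{M_1\cap M_2}$ at the level of bimodules. Recall that $Q\lessdot_M M_i$ is characterized by the weak containment $L^2(Q)\prec\bigoplus L^2(\langle M,e_{M_i}\rangle)$ of $Q$-bimodules; for the composition argument below it is convenient to use the standard equivalent formulation ${}_{Q}L^2(M)_{M}\prec{}_{Q}[L^2(M)\otimes_{M_i}L^2(M)]_{M}$ of $Q$-$M$ bimodules, using $L^2(\langle M,e_{M_i}\rangle)\simeq L^2(M)\otimes_{M_i}L^2(M)$. (Working in the corner $pMp$ only cuts everything with $p$, which I suppress.) Thus the hypotheses give
\[
{}_{Q}L^2(M)_{M}\prec{}_{Q}[L^2(M)\otimes_{M_1}L^2(M)]_{M}, \qquad {}_{Q}L^2(M)_{M}\prec{}_{Q}[L^2(M)\otimes_{M_2}L^2(M)]_{M},
\]
and the goal is ${}_{Q}L^2(M)_{M}\prec{}_{Q}[L^2(M)\otimes_{M_1\cap M_2}L^2(M)]_{M}$.

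First I would compose the two containments by tensoring over $M$. Since Connes fusion preserves weak containment, tensoring the first containment on the right by the $M$-$M$ bimodule $L^2(M)\otimes_{M_2}L^2(M)$ and using $L^2(M)\otimes_M\mathcal{H}\simeq\mathcal{H}$ yields
\[
{}_{Q}[L^2(M)\otimes_{M_2}L^2(M)]_{M}\prec{}_{Q}[L^2(M)\otimes_{M_1}L^2(M)\otimes_{M_2}L^2(M)]_{M}.
\]
Combining this with the second containment gives
\[
{}_{Q}L^2(M)_{M}\prec{}_{Q}[L^2(M)\otimes_{M_1}L^2(M)\otimes_{M_2}L^2(M)]_{M},
\]
where the middle factor is to be read as the $M_1$-$M_2$ bimodule ${}_{M_1}L^2(M)_{M_2}$. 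Everything now reduces to controlling this single bimodule.

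Next I would record the bimodule incarnation of the commuting square: a direct computation of inner products, using $E_{M_2}|_{M_1}=E_{M_1\cap M_2}$, shows that the $\|\cdot\|_2$-closure of $M_1M_2$ inside $L^2(M)$ is isomorphic, as an $M_1$-$M_2$ bimodule, to $L^2(M_1)\otimes_{M_1\cap M_2}L^2(M_2)$. If I can promote this to a weak containment of the \emph{entire} middle bimodule, namely
\[
{}_{M_1}L^2(M)_{M_2}\prec\bigoplus{}_{M_1}[L^2(M_1)\otimes_{M_1\cap M_2}L^2(M_2)]_{M_2},
\]
then substituting into the display above and using the absorption identities $L^2(M)\otimes_{M_1}L^2(M_1)\simeq L^2(M)$ and $L^2(M_2)\otimes_{M_2}L^2(M)\simeq L^2(M)$ collapses the right-hand side to $\bigoplus L^2(M)\otimes_{M_1\cap M_2}L^2(M)$. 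This is precisely the weak containment characterizing $Q\lessdot_M M_1\cap M_2$, which would finish the proof.

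The hard part is the promotion step, and this is exactly where regularity of $M_1$ enters. The sub-bimodule $\overline{M_1M_2}$ captures only part of $L^2(M)$; to reach all of it I would use that $M=\mathcal{N}_M(M_1)''$, so $L^2(M)$ is the closed span of the translates $u\cdot\overline{M_1M_2}$ over normalizers $u\in\mathcal{N}_M(M_1)$. Since $uM_1u^*=M_1$, each translate is, as a left $M_1$-module, an $\Ad(u)$-twist of $\overline{M_1M_2}$ and hence still factors through $M_1\cap M_2$ on the left; the delicate point is to simultaneously control the right $M_2$-action after translation, where the commuting square and the regularity of $M_1$ must be used together. I expect this bookkeeping — showing that the full ${}_{M_1}L^2(M)_{M_2}$, and not merely the sub-bimodule generated by $M_1M_2$, is weakly contained in amplifications of $L^2(M_1)\otimes_{M_1\cap M_2}L^2(M_2)$ — to be the main obstacle. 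An alternative route, avoiding the bimodule promotion, is to work directly with $Q$-central states: extend the given $Q$-central state on $\langle M,e_{M_1}\rangle$ along the iterated basic construction $\langle M,e_{M_1},e_{M_2}\rangle$ using the second state, and then restrict to $\langle M,e_{M_1\cap M_2}\rangle$, noting that $e_{M_1\cap M_2}=e_{M_1}e_{M_2}$ by the commuting square; the same regularity input is needed there to guarantee that the combined functional remains $Q$-central and trace-preserving.
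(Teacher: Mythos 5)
The paper offers no proof of this statement at all: it is imported verbatim as Proposition 2.7 of \cite{PV11}, so there is nothing internal to compare your argument against, and it must be judged on its own terms. As written, your proposal is not a proof, because the step you yourself single out as ``the main obstacle'' --- promoting the commuting-square identification $\overline{M_1M_2}\simeq L^2(M_1)\otimes_{M_1\cap M_2}L^2(M_2)$ to a weak containment of the \emph{whole} bimodule ${}_{M_1}L^2(M)_{M_2}$ in $\bigoplus L^2(M_1)\otimes_{M_1\cap M_2}L^2(M_2)$ --- is precisely where the two hypotheses (regularity of $M_1$ and the commuting square) must interact, and you leave it unestablished. Everything preceding it is formal bookkeeping that uses neither hypothesis. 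The sketch you offer for the promotion runs into exactly the difficulty you acknowledge: for $u\in\mathcal{N}_M(M_1)$, $\Ad(u)$ preserves $M_1$ but has no reason to preserve $M_2$ or $M_1\cap M_2$, so the translates $u\cdot\overline{M_1M_2}$ are not obviously $M_1$-$M_2$ sub-bimodules of the required type, nor do they organize into a direct sum; controlling the right $M_2$-action after translation is the entire content of the theorem, not a bookkeeping afterthought.

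Two further cautions. First, your composition step invokes ``Connes fusion preserves weak containment'' as a black box; this is not true for arbitrary bimodules and must be justified for the specific bimodules $L^2(M)\otimes_{M_i}L^2(M)$ you use --- the safe route is to phrase relative amenability via $Q$-central states on $\langle M,e_{M_i}\rangle$ as in Theorem 2.1 of \cite{OP1} and compose the associated completely positive maps, which is how transitivity of $\lessdot_M$ is handled in Proposition 2.4 of \cite{OP1}. Second, the actual argument in \cite{PV11} is organized differently from your reduction: it works with an almost $Q$-central, almost trace-preserving net of vectors in $L^2(\langle M,e_{M_2}\rangle)$, uses the operator identity $e_{M_1}e_{M_2}=e_{M_1\cap M_2}$ (the Hilbert-space incarnation of the commuting square) to realize $\langle M_1,e_{M_1\cap M_2}\rangle$ inside $\langle M,e_{M_2}\rangle$, and brings in the regularity of $M_1$ to retain $Q$-centrality when passing to a state on $\langle M,e_{M_1\cap M_2}\rangle$. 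If you wish to complete your bimodule-theoretic route, you will in effect have to reprove that lemma; I would recommend either doing so explicitly or abandoning the promotion step in favor of the state-based argument.
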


Notice that this theorem allows us to eliminate the case where $Q$
is amenable over $M_1$. More specifically we have the following
observation.

\begin{prop}\label{intersect}Let $G_1$ and $G_2$ be groups.  Let $A$ be a finite amenable von Neumann algebra with
an action of $G_1\times G_2$, and let $Q\subset A\rtimes G_1\times
G_2$ be a nonamenable subalgebra. Then there exists an $i$ such that
$Q$ is not amenable over $A\rtimes G_i$.
\end{prop}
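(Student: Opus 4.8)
The plan is to apply Theorem~\ref{PVlemma} with the natural choice $M = A \rtimes (G_1 \times G_2)$, $M_1 = A \rtimes G_1$, and $M_2 = A \rtimes G_2$. First I would verify the two structural hypotheses of that theorem. For regularity, note that $G_2$ normalizes $A \rtimes G_1$ inside $M$ (since $G_1$ and $G_2$ commute and $A$ is fixed setwise by both actions), so the canonical unitaries $\{u_g : g \in G_2\}$ together with the unitaries of $A \rtimes G_1$ generate $M$; hence $M_1 = A \rtimes G_1$ is regular in $M$. For the commuting square, I would check on the group-element basis that $E_{M_1} E_{M_2} = E_{M_2} E_{M_1} = E_A$, which follows from the fact that the conditional expectations onto $A \rtimes G_1$ and $A \rtimes G_2$ are given by restricting Fourier support to $G_1$ and $G_2$ respectively, and $G_1 \cap G_2 = \{e\}$. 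In particular $M_1 \cap M_2 = A$.

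Next I would argue by contradiction. Suppose the conclusion fails, so that $Q$ is amenable over $A \rtimes G_1$ and simultaneously amenable over $A \rtimes G_2$; in the notation of the paper, $\amenOver{Q}{A \rtimes G_1}{M}$ and $\amenOver{Q}{A \rtimes G_2}{M}$. Theorem~\ref{PVlemma} then forces $Q$ to be amenable relative to $M_1 \cap M_2 = A$, i.e. $\amenOver{Q}{A}{M}$. Since $A$ is an amenable von Neumann algebra, amenability relative to $A$ upgrades to amenability of $Q$ itself: the existence of a $Q$-central state on $\langle M, e_A\rangle$ restricting to $\tau$ on $M$, combined with the amenability of $A$ (hence the amenability of the $A$-$A$ bimodule structure), yields a hypertrace on $Q$. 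This contradicts the hypothesis that $Q$ is nonamenable.

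The main obstacle I anticipate is the bookkeeping that turns ``$Q$ amenable relative to $A$ with $A$ amenable'' into ``$Q$ amenable.'' This is a standard transitivity property of relative amenability (analogous to part~(3) of Proposition~2.4 in \cite{OP1}, which was already invoked in the proof of Theorem~\ref{nAmenOverEmbedd}), but one must be careful that $\amenOver{Q}{A}{M}$ together with $A$ amenable gives the absolute statement rather than merely relative amenability over a smaller algebra. I would make this precise by composing the $Q$-central state on $\langle M, e_A \rangle$ with a suitable state arising from the injectivity of $A$, so that the resulting state witnesses that the coarse $Q$-bimodule weakly contains the trivial bimodule, which is exactly the amenability of $Q$. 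A secondary point requiring care is the verification that $A$ is genuinely fixed by both factors of $G_1 \times G_2$ so that $M_1$ and $M_2$ are well-defined crossed products with the stated intersection; this is where the hypothesis that $A$ carries an action of the full product $G_1 \times G_2$ is used.
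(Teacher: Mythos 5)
Your proposal is correct and follows essentially the same route as the paper: both apply the Popa--Vaes commuting square theorem (Theorem~\ref{PVlemma}) to $M_1 = A \rtimes G_1$ and $M_2 = A \rtimes G_2$, conclude that relative amenability over both would give relative amenability over the intersection $A$, and then use amenability of $A$ to contradict nonamenability of $Q$. The paper's proof is simply terser, leaving the regularity and commuting-square verifications, as well as the final upgrade from relative to absolute amenability, to the reader.
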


\begin{proof}
If we let $A\rtimes G_i=M_i$ then it is easy to see that $M_1,
M_2\subset M$ form a commuting square. So if $Q$ is amenable over
both $M_i$ we would have that it would be amenable over the
intersection, which is $A$. Since $A$ is amenable this would imply
that $Q$ is amenable.
\end{proof}

Finally combining the above results we can prove our main theorem
(Theorem \ref{maintheorem}).

\begin{proof}
First let us mention that for the case $n=1$, this is equivalent to
the primeness of II$_1$-factors arising from Bernoulli shifts, which
was proven in \cite{P-gap}.

Now notice that we can write $M$ as $M=N_i\rtimes_\sigma A_i\wr
H_i$, where $N_i=L(A_1\wr H_1) \overline{\otimes}
\dots\overline{\otimes}L(A_{i-1}\wr
H_{i-1})\overline{\otimes}L(A_{i+1}\wr H_{i+1})\overline{\otimes}
\dots\overline{\otimes} L(A_{n}\wr H_{n})$ and $\sigma$ is the
trivial action.

%If $n=1$ then $k=1$ and $L(A_1 \wr H_1) = L(Q_1)$ because $L(A_1 \wr
%H_1)$ is Prime.  Could it be the case that $L(A_1 \wr H_1) \simeq
%L(Q_1)$ and $L(Q_1) \otimes \dots \otimes L(Q_k) \simeq \M_m$? Or is
%this ruled out because the $Q_i$'s are non-gamma $II_1$ factors?

Let us define $\widehat{Q_i}=(Q_i)'\cap
M=Q_1\overline{\otimes}\dots\overline{\otimes}Q_{i-1}\overline{\otimes}Q_{i+1}
\overline{\otimes}\dots\overline{\otimes}Q_k$. Since
$H_i\wr\Gamma_i$ does not have property Gamma for all $i$ this
implies, in particular, that $Q_1$ is non-amenable. By proposition
\ref{intersect}, where we let $A=\mathbb{C}$, we know that there is
an $i$ such that $Q_1$ is not amenable over $N_i$

Since $\widehat{Q_1}$ is a regular subalgebra of $M$, then by
Theorem \ref{nAmenOverEmbedd} we get that $\widehat{Q_1}\prec_M N$.

We complete the argument by following Proposition 12 and the
induction argument of the proof of Theorem 1 in \cite{OP03}.

\end{proof}

Before we prove our final theorem let us recall the following
definition:

\begin{defn}
We say that two group $\Gamma$ and $\Lambda$ are \emph{measure
equivalent}, $\Gamma\simeq_{ME}\Lambda$ is there is a diffuse
abelian von Neumann algebra, $A$, and free ergodic trace preserving
actions, $\sigma, \rho$ of $\Gamma$ and $\Lambda$, respectively,
such that $A\rtimes_\sigma\Gamma\simeq (A\rtimes_\rho\Lambda)^t$,
and the isomorphism takes $A$ onto $A^t$.
\end{defn}

With this definition we can now prove our final result (Corollary
\ref{maintheorem2}.)

\begin{proof}
Let $A_1\wr H_1, ..., A_n\wr H_n$ be as above, and let $K_1, ...,
K_m$ be groups. Since $A_1\wr H_1\times\dots\times A_n\wr
H_n\simeq_{ME} K_1\times\dots\times K_m$ and $A_i\wr H_i$ is
nonamenable for all $i$, then $K_j$ is nonamenable for all $j$.

Now we know that there are actions on $L^\infty (X)$ such that
$M=L^\infty(X)\rtimes A_1\wr H_1\times\dots\times A_n\wr
H_n\simeq(L^\infty(X)\rtimes K_1\times\dots\times K_m)^t$. We may
assume that $t=1$.

Let $N_i=A\rtimes A_1\wr H_1\times\dots A_{i-1}\wr H_{i-1}\times
A_{i+1}\wr H_{i+1}\times\dots\times A_n\wr H_n$, so that we have
$M=N_i\rtimes A_i\wr H_i$. As in the proof of the previous theorem,
since $K_i$ is nonamenable, there is an $i$ such that $L(K_1)$ is
nonamenable over $N_i$. Now by the proof of Theorem
\ref{nAmenOverEmbedd} this implies that $L(K_1)'\cap
M=L(K_2\times\dots\times K_m)\prec N_i\rtimes H_i$. Thus by Lemma
2.2 in \cite{cps} we have that $A\rtimes K_2\times\dots\times
K_m\prec N_i\rtimes H_i$. Now since $A\rtimes K_2\times\dots\times
K_m$ is a regular subalgebra we have by Theorem
\ref{nAmenOverEmbedd} that $A\rtimes K_2\times\dots\times K_m\prec
N_i$.

Notice that now we can follow exactly as in the proof of Corolarry C
in \cite{CS10} to get our desired result.

\end{proof}

\end{document}